\newtheorem{theorem}{Theorem}
\theoremstyle{plain}
\newtheorem{corollary}{Corollary}
\newtheorem{definition}{Definition}
\newtheorem{lemma}{Lemma}
\newtheorem{remark}{Remark}
\numberwithin{equation}{section}
\begin{document}
\title[Generalized Sz\'{a}sz-Mirakyan Operators]{$(p,q)$-Generalization of Sz%
\'{a}sz-Mirakyan Operators}
\author{Tuncer Acar}
\address{Kirikkale University, Faculty of Science and Arts, Department of
Mathematics, Yahsihan, 71450, Kirikkale, Turkey}
\email{tunceracar@ymail.com}
\subjclass[2000]{ 41A25, 41A35, 41A36}
\keywords{$(p,q)$-integers, $(p,q)$-Sz\'{a}sz-Mirakyan operators, weighted
approximation}

\begin{abstract}
In this paper, we introduce new modifications of Sz\'{a}sz-Mirakyan
operators based on $(p,q)$-integers. We first give a recurrence relation for
the moments of new operators and present explicit formula for the moments
and central moments up to order $4.$ Some approximation properties of new
operators are explored: the uniform convergence over bounded and unbounded
intervals is established, direct approximation properties of the operators
in terms of the moduli of smoothness is obtained and Voronovskaya theorem is
presented. For the particular case $p=1$, the previous results for $q$-Sz%
\'{a}sz-Mirakyan operators are captured.
\end{abstract}

\maketitle

\section{Introduction}

With a great potential for applications, approximation theory has an
important role in mathematical research. Since Korovkin's famous theorem in
1950, the study of the linear methods of approximation given by sequences of
positive and linear operators became a firmly entrenched part of
approximation theory. Due to this fact, the well-known operators as
Bernstein, Sz\'{a}sz, Baskakov etc. and their generalizations have been
studied intensively. The rapid development of $q$-calculus has led to the
discovery of new generalizations of approximation operators based on $q$%
-integers. A pioneer of $q$-calculus in approximation theory is Lupas \cite%
{lupas}, who first introduced the $q$-analogue of Bernstein polynomials.
Since approximation by $q$-Bernstein polynomials is better than classical
one under convenient choice of $q,$ generalizations of other well-known
approximation operators were introduced, we refer the readers to \cite{aral}
for more comprehensive details.

Very recently, Mursaleen et al. applied $(p,q)$-calculus in approximation
theory and introduced first $(p,q)$-analogue of Bernstein operators. They
investigated uniform convergence of the operators and rate of convergence,
obtained Voronovskaya theorem as well. Also, $(p,q)$-analogue of
Bernstein-Stancu operators and Bleimann-Butzer-Hahn operators were
introduced in \cite{mursalen2} and \cite{mursalen3}, respectively.

In the present paper, we introduce $(p,q)$-analogue of Sz\'{a}sz-Mirakyan
operators. We begin by recalling certain notations of $(p,q)$-calculus. Let $%
0<q<p\leq 1.$ For each nonnegative integer $k,$ $n$, $n\geq k\geq 0$, the $%
(p,q)$-integer $\left[ k\right] _{p,q}$, $(p,q)$-factorial $\left[ k\right]
_{p,q}!$ and $(p,q)$-binomial are defined by%
\begin{equation*}
\left[ k\right] _{p,q}:=\frac{p^{k}-q^{k}}{p-q},
\end{equation*}%
\begin{equation*}
\left[ k\right] _{p,q}!:=\left \{ 
\begin{array}{lll}
\left[ k\right] _{p,q}\left[ k-1\right] _{p,q}...1 & , & k\geq 1, \\ 
1, &  & k=0%
\end{array}%
\right. 
\end{equation*}%
and%
\begin{equation*}
\QATOPD[ ] {n}{k}_{p,q}:=\frac{\left[ n\right] _{p,q}!}{\left[ n-k\right]
_{p,q}!\left[ k\right] _{p,q}!},
\end{equation*}

Note that if we take $p=1$ in above notations, they reduce to $q$-analogues.

Further,%
\begin{equation*}
\left( x+y\right) _{p,q}^{n}:=\left( x+y\right) \left( px+qy\right) \left(
p^{2}x+q^{2}y\right) ...\left( p^{n-1}x+q^{n-1}y\right) .
\end{equation*}%
Also the $(p,q)$-derivative of a function $f$, denoted by $D_{p,q}f,$ is
defined by%
\begin{equation*}
\left( D_{p,q}f\right) \left( x\right) :=\frac{f\left( px\right) -f\left(
qx\right) }{\left( p-q\right) x},\text{ }x\neq 0,\text{ }\left(
D_{p,q}f\right) \left( 0\right) :=f^{\prime }\left( 0\right) 
\end{equation*}%
provided that $f$ is differentiable at $0.$ The formula for the $(p,q)$%
-derivative of a product is%
\begin{equation*}
D_{p,q}\left( u\left( x\right) v\left( x\right) \right) :=D_{p,q}\left(
u\left( x\right) \right) v\left( qx\right) +D_{p,q}\left( v\left( x\right)
\right) u\left( qx\right) .
\end{equation*}%
For more details on $(p,q)$-calculus, we refer the readers to \cite{Houn,
katriel, jag, sadjang, sahai} and references therein. There are two $(p,q)$%
-analogues of the exponential function, see \cite{jag},%
\begin{equation*}
e_{p,q}\left( x\right) =\dsum \limits_{n=0}^{\infty }\frac{p^{\frac{n\left(
n-1\right) }{2}}x^{n}}{\left[ n\right] _{p,q}!},
\end{equation*}%
and%
\begin{equation}
E_{p,q}\left( x\right) =\dsum \limits_{n=0}^{\infty }\frac{q^{\frac{n\left(
n-1\right) }{2}}x^{n}}{\left[ n\right] _{p,q}!},  \label{t11}
\end{equation}%
which satisfy the equality $e_{p,q}\left( x\right) E_{p,q}\left( -x\right)
=1.$ For $p=1$, $e_{p,q}\left( x\right) $ and $E_{p,q}\left( x\right) $
reduce to $q$-exponential functions.

\section{Construction of operators and Auxiliary results}

Considering the second $(p,q)$-exponential function (\ref{t11}), we set the $%
(p,q)$-Sz\'{a}sz-Mirakyan basis function as%
\begin{equation*}
s_{n}\left( p,q;x\right) =:\frac{1}{E\left( \left[ n\right] _{p,q}x\right) }%
q^{\frac{k\left( k-1\right) }{2}}\frac{\left[ n\right] _{p,q}^{k}x^{k}}{%
\left[ k\right] _{p,q}!},\text{ }n=1,2,...
\end{equation*}%
For $q\in \left( 0,1\right) ,$ $p\in \left( q,1\right] $ and $x\in \left[
0,\infty \right) $, $s_{n}\left( p,q;x\right) \geq 0$. We can also easily
see that%
\begin{equation*}
\sum_{k=0}^{\infty }s_{n}\left( p,q;x\right) =\frac{1}{E\left( \left[ n%
\right] _{p,q}x\right) }\sum_{k=0}^{\infty }q^{\frac{k\left( k-1\right) }{2}}%
\frac{\left[ n\right] _{p,q}^{k}x^{k}}{\left[ k\right] _{p,q}!}=1.
\end{equation*}

\begin{definition}
Let $0<q<p\leq 1$ and $n\in \mathbb{N}.$ For $f:\left[ 0,\infty \right)
\rightarrow \mathbb{R}$ we define the $(p,q)$-analogue of Sz\'{a}sz-Mirakyan
operators by%
\begin{equation}
S_{n,p,q}\left( f;x\right) :=\frac{1}{E\left( \left[ n\right] _{p,q}x\right) 
}\sum_{k=0}^{\infty }f\left( \frac{\left[ k\right] _{p,q}}{q^{k-2}\left[ n%
\right] _{p,q}}\right) q^{\frac{k\left( k-1\right) }{2}}\frac{\left[ n\right]
_{p,q}^{k}x^{k}}{\left[ k\right] _{p,q}!}.  \label{op}
\end{equation}
\end{definition}

The operators (\ref{op}) are linear and positive. For $p=1,$ the operators (%
\ref{op}) turn out to be $q$-Sz\'{a}sz-Mirakyan operators defined in \cite%
{mahmudov}.

\begin{lemma}
Let $0<q<p\leq 1$ and $n\in \mathbb{N}.$ We have%
\begin{equation}
S_{n,p,q}\left( t^{m+1};x\right) =\sum_{j=0}^{m}\binom{m}{j}\frac{xp^{j}}{%
q^{2j-m-1}\left[ n\right] _{p,q}^{m-j}}S_{n,p,q}\left( t^{j};x\right) .
\label{recurrence}
\end{equation}
\end{lemma}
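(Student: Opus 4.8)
The plan is to work with the unnormalized moment sums. Writing $N := [n]_{p,q}$ for brevity and denoting the node by $y_k := \frac{[k]_{p,q}}{q^{k-2}N}$, I set
$$F_m(x) := E(Nx)\, S_{n,p,q}(t^m;x) = \sum_{k=0}^{\infty} y_k^{m}\, q^{\frac{k(k-1)}{2}} \frac{N^k x^k}{[k]_{p,q}!},$$
so that, after multiplying the claimed identity through by $E(Nx)$, it is equivalent to a recurrence among the $F_m$. The idea is to peel off one factor $y_k$ from $F_{m+1}$ and absorb it into the summation index.

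First I would note that the $k=0$ term vanishes since $[0]_{p,q}=0$, and use the cancellation $[k]_{p,q}/[k]_{p,q}! = 1/[k-1]_{p,q}!$ to remove one factorial. Reindexing $k \mapsto k+1$ then shifts the summation back to $k \geq 0$, at the cost of replacing $[k]_{p,q}$ by $[k+1]_{p,q}$ in the surviving node factor and producing an overall prefactor. A short computation using the $q$-exponent identity $\tfrac{(k+1)k}{2} = \tfrac{k(k-1)}{2} + k$ together with the powers of $N$ and $x$ collapses this prefactor to $qx$.

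The crux is then to re-expose the node $y_k$. Here I would invoke the $(p,q)$-integer identity $[k+1]_{p,q} = q^{k} + p[k]_{p,q}$, which rewrites the shifted node as
$$\frac{[k+1]_{p,q}}{q^{k-1}N} = \frac{q}{N} + \frac{p}{q}\, y_k.$$
The essential point — and the step most prone to error — is the choice of this particular splitting of $[k+1]_{p,q}$, rather than the companion $[k+1]_{p,q} = p^{k} + q[k]_{p,q}$, because it is exactly the one for which the additive constant $q/N$ is independent of $k$. With a $k$-dependent constant the subsequent binomial expansion would not factor through the $F_i$.

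Finally I would expand $\left(\frac{q}{N} + \frac{p}{q}y_k\right)^{m}$ by the binomial theorem, interchange the finite outer sum with the series in $k$, and recognize each inner series $\sum_k y_k^{i}\, q^{\frac{k(k-1)}{2}} \frac{N^k x^k}{[k]_{p,q}!}$ as $F_i(x)$. Dividing through by $E(Nx)$ restores the operators $S_{n,p,q}(t^i;x)$, and collecting the powers $q\cdot q^{m-i}\cdot q^{-i} = q^{m-2i+1} = q^{-(2i-m-1)}$ together with the $p^i$ and $N^{-(m-i)}$ factors yields precisely the stated coefficient. The remaining work is the routine algebra of the exponents.
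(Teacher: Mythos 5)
Your proposal is correct and is essentially the paper's own proof: both peel one node factor off via $[k]_{p,q}/[k]_{p,q}!=1/[k-1]_{p,q}!$, apply the same $(p,q)$-identity (yours, $[k+1]_{p,q}=q^{k}+p[k]_{p,q}$, is the paper's $[k]_{p,q}=q^{k-1}+p[k-1]_{p,q}$ with shifted index), expand by the binomial theorem, and interchange the sums to recognize the lower moments. The only differences are cosmetic — you reindex before invoking the identity rather than after, and you work with the unnormalized sums $F_m=E([n]_{p,q}x)\,S_{n,p,q}(t^m;\cdot)$ instead of carrying the factor $1/E([n]_{p,q}x)$ throughout.
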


\begin{proof}
Using the identity%
\begin{equation*}
\left[ k\right] _{p,q}=q^{k-1}+p\left[ k-1\right] _{p,q}
\end{equation*}%
we can write%
\begin{eqnarray*}
&&S_{n,p,q}\left( t^{m+1};x\right) \\
&=&\frac{1}{E\left( \left[ n\right] _{p,q}x\right) }\sum_{k=0}^{\infty }%
\frac{\left[ k\right] _{p,q}^{m+1}}{q^{\left( k-2\right) \left( m+1\right) }%
\left[ n\right] _{p,q}^{\left( m+1\right) }}q^{\frac{k\left( k-1\right) }{2}}%
\frac{\left[ n\right] _{p,q}^{k}x^{k}}{\left[ k\right] _{p,q}!} \\
&=&\frac{q}{E\left( \left[ n\right] _{p,q}x\right) }\sum_{k=1}^{\infty }%
\frac{\left[ k\right] _{p,q}^{m}}{q^{\left( k-2\right) }\left[ n\right]
_{p,q}^{m}}q^{\frac{k\left( k-1\right) }{2}-k+1}\frac{\left[ n\right]
_{p,q}^{k-1}x^{k}}{\left[ k-1\right] _{p,q}!} \\
&=&\frac{q}{E\left( \left[ n\right] _{p,q}x\right) }\sum_{k=1}^{\infty }%
\frac{\left( q^{k-1}+p\left[ k-1\right] _{p,q}\right) ^{m}}{q^{\left(
k-2\right) m}\left[ n\right] _{p,q}^{m}}q^{\frac{k\left( k-1\right) }{2}-k+1}%
\frac{\left[ n\right] _{p,q}^{k-1}x^{k}}{\left[ k-1\right] _{p,q}!} \\
&=&\frac{q}{E\left( \left[ n\right] _{p,q}x\right) }\sum_{k=1}^{\infty
}\sum_{j=0}^{m}\binom{m}{j}q^{\left( k-1\right) \left( m-j\right) }p^{j}%
\left[ k-1\right] _{p,q}^{j}\frac{q^{\frac{\left( k-1\right) \left(
k-2\right) }{2}}}{q^{\left( k-2\right) m}\left[ n\right] _{p,q}^{m}}\frac{%
\left[ n\right] _{p,q}^{k-1}x^{k}}{\left[ k-1\right] _{p,q}!} \\
&=&\frac{q}{E\left( \left[ n\right] _{p,q}x\right) }\sum_{j=0}^{m}\binom{m}{j%
}\frac{xp^{j}}{q^{2j-m}\left[ n\right] _{p,q}^{m-j}}\sum_{k=1}^{\infty }%
\frac{\left[ k-1\right] _{p,q}^{j}}{q^{\left( k-3\right) j}\left[ n\right]
_{p,q}^{j}}q^{\frac{\left( k-1\right) \left( k-2\right) }{2}}\frac{\left[ n%
\right] _{p,q}^{k-1}x^{k-1}}{\left[ k-1\right] _{p,q}!} \\
&=&\frac{q}{E\left( \left[ n\right] _{p,q}x\right) }\sum_{j=0}^{m}\binom{m}{j%
}\frac{xp^{j}}{q^{2j-m}\left[ n\right] _{p,q}^{m-j}}\sum_{k=0}^{\infty }%
\frac{\left[ k\right] _{p,q}^{j}}{q^{\left( k-2\right) j}\left[ n\right]
_{p,q}^{j}}q^{\frac{k\left( k-1\right) }{2}}\frac{\left[ n\right]
_{p,q}^{k}x^{k}}{\left[ k\right] _{p,q}!} \\
&=&\sum_{j=0}^{m}\binom{m}{j}\frac{xp^{j}}{q^{2j-m-1}\left[ n\right]
_{p,q}^{m-j}}S_{n,p,q}\left( t^{j};x\right) ,
\end{eqnarray*}%
which is desired.
\end{proof}

\begin{lemma}
\label{MOMENTS}Let $0<q<p\leq 1$ and $n\in \mathbb{N}.$ We have%
\begin{eqnarray}
S_{n,p,q}\left( 1;x\right)  &=&1  \label{mom1} \\
S_{n,p,q}\left( t;x\right)  &=&qx  \label{mom2} \\
S_{n,p,q}\left( t^{2};x\right)  &=&pqx^{2}+\frac{q^{2}x}{\left[ n\right]
_{p,q}}  \label{mom3} \\
S_{n,p,q}\left( t^{3};x\right)  &=&p^{3}x^{3}+\frac{x^{2}\left(
p^{2}q+2pq^{2}\right) }{\left[ n\right] _{p,q}}+\frac{q^{3}x}{\left[ n\right]
_{p,q}^{2}}  \notag \\
S_{n,p,q}\left( t^{4};x\right)  &=&\frac{p^{6}x^{4}}{q^{2}}+\frac{x^{3}}{%
\left[ n\right] _{p,q}}p^{3}q\left( \frac{p^{2}+2q+3q^{2}}{q^{2}}\right)  
\notag \\
&&+\frac{x^{2}}{\left[ n\right] _{p,q}^{2}}pq\left( p^{2}+3pq+3q^{2}\right) +%
\frac{q^{4}x}{\left[ n\right] _{p,q}^{3}}.  \notag
\end{eqnarray}
\end{lemma}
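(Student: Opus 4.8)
The plan is to compute the moments by induction on $m$ using the recurrence relation (\ref{recurrence}) established in the previous lemma. The base case is $S_{n,p,q}(1;x)=1$, which was already verified in the construction of the operators (the basis functions sum to $1$). From there, each higher moment follows by specializing the recurrence to the appropriate value of $m$.

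First I would establish (\ref{mom2}) by setting $m=0$ in (\ref{recurrence}). In that case the sum on the right-hand side has only the single term $j=0$, giving
\begin{equation*}
S_{n,p,q}(t;x)=\binom{0}{0}\frac{xp^{0}}{q^{-1}[n]_{p,q}^{0}}S_{n,p,q}(1;x)=qx,
\end{equation*}
using $S_{n,p,q}(1;x)=1$. Next, for (\ref{mom3}) I would put $m=1$, so the right side splits into the $j=0$ and $j=1$ terms; substituting the already-known values $S_{n,p,q}(1;x)=1$ and $S_{n,p,q}(t;x)=qx$ and simplifying the powers of $q$ yields $pqx^{2}+q^{2}x/[n]_{p,q}$. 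The same mechanism with $m=2$ and then $m=3$ produces the formulas for $S_{n,p,q}(t^{3};x)$ and $S_{n,p,q}(t^{4};x)$ respectively, each time feeding in the lower-order moments computed at the previous stage.

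The only real work is bookkeeping: at each step one must correctly expand the binomial coefficients, collect the various powers of $p$ and $q$ arising from the factor $p^{j}q^{m+1-2j}$, and group terms by the power of $[n]_{p,q}$ in the denominator. I would organize the computation so that the coefficient of $x^{m+1-i}/[n]_{p,q}^{i}$ is assembled separately for each $i$, since the final formulas are naturally stratified by the power of $1/[n]_{p,q}$. The main obstacle I anticipate is purely algebraic—keeping the exponents of $q$ straight through the chain of substitutions, particularly for the order-$4$ moment where the $q^{-2}$ factor in the leading coefficient $p^{6}x^{4}/q^{2}$ signals that cancellations do not reduce everything to nonnegative powers of $q$. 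No new idea beyond the recurrence is needed, so I would simply carry out the four specializations in sequence and verify that each collapses to the stated closed form.

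\medskip

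\noindent\emph{Remark.} As a consistency check one can set $p=1$ throughout and confirm that the formulas reduce to the known moments of the $q$-Sz\'{a}sz-Mirakyan operators of \cite{mahmudov}, which provides independent verification of the power-of-$q$ bookkeeping.
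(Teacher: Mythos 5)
Your proposal is correct and follows exactly the paper's own method: iterate the recurrence (\ref{recurrence}) with $m=0,1,2,3$, feeding in the previously computed lower moments at each stage (the paper writes out only the $m=2$ and $m=3$ cases, exactly the ones you defer to bookkeeping, while you detail the $m=0,1$ cases it omits). Your worked specializations for $m=0$ and $m=1$ check out, so nothing is missing beyond the routine algebra you describe.
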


\begin{proof}
Since the proof of each equalities have same method we give the proof for
only last two equalities. Using (\ref{recurrence}), we get%
\begin{eqnarray*}
S_{n,p,q}\left( t^{3};x\right)  &=&\frac{q^{3}x}{\left[ n\right] _{p,q}^{2}}%
S_{n,p,q}\left( 1;x\right) +\frac{2xpq}{\left[ n\right] _{p,q}}%
S_{n,p,q}\left( t;x\right) +\frac{xp^{2}}{q}S_{n,p,q}\left( t^{2};x\right) 
\\
&=&\frac{q^{3}x}{\left[ n\right] _{p,q}^{2}}+\frac{2x^{2}pq^{2}}{\left[ n%
\right] _{p,q}}+\frac{xp^{2}}{q}\left( pqx^{2}+\frac{q^{2}x}{\left[ n\right]
_{p,q}}\right)  \\
&=&p^{3}x^{3}+\frac{x^{2}\left( p^{2}q+2pq^{2}\right) }{\left[ n\right]
_{p,q}}+\frac{q^{3}x}{\left[ n\right] _{p,q}^{2}}
\end{eqnarray*}%
and%
\begin{eqnarray*}
S_{n,p,q}\left( t^{4};x\right)  &=&\frac{q^{4}x}{\left[ n\right] _{p,q}^{3}}%
S_{n,p,q}\left( 1;x\right) +\frac{3xpq^{2}}{\left[ n\right] _{p,q}^{2}}%
S_{n,p,q}\left( t;x\right)  \\
&&+\frac{3xp^{2}}{\left[ n\right] _{p,q}}S_{n,p,q}\left( t^{2};x\right) +%
\frac{xp^{3}}{q^{2}}S_{n,p,q}\left( t^{3};x\right)  \\
&=&\frac{q^{4}x}{\left[ n\right] _{p,q}^{3}}+\frac{3x^{2}pq^{3}}{\left[ n%
\right] _{p,q}^{2}}+\frac{3xp^{2}}{\left[ n\right] _{p,q}}\left( pqx^{2}+%
\frac{q^{2}x}{\left[ n\right] _{p,q}}\right)  \\
&&+\frac{xp^{3}}{q^{2}}\left( p^{3}x^{3}+\frac{x^{2}\left(
p^{2}q+2pq^{2}\right) }{\left[ n\right] _{p,q}}+\frac{q^{3}x}{\left[ n\right]
_{p,q}^{2}}\right)  \\
&=&\frac{p^{6}x^{4}}{q^{2}}+\frac{x^{3}}{\left[ n\right] _{p,q}}p^{3}q\left( 
\frac{p^{2}+2q+3q^{2}}{q^{2}}\right)  \\
&&+\frac{x^{2}}{\left[ n\right] _{p,q}^{2}}pq\left( p^{2}+3pq+3q^{2}\right) +%
\frac{q^{4}x}{\left[ n\right] _{p,q}^{3}}.
\end{eqnarray*}
\end{proof}

\begin{corollary}
Using Lemma \ref{MOMENTS}, we immediately have the following explicit
formulas for the central moments.%
\begin{align}
S_{n,p,q}\left( t-x;x\right) & =\left( q-1\right) x  \notag \\
S_{n,p,q}\left( \left( t-x\right) ^{2};x\right) & =x^{2}\left(
pq-2q+1\right) +\frac{q^{2}x}{\left[ n\right] _{p,q}}  \label{second mom} \\
S_{n,p,q}\left( \left( t-x\right) ^{4};x\right) & =x^{4}\left( \frac{p^{6}}{%
q^{2}}-4p^{3}+6pq-4q+1\right)   \notag \\
& +\frac{x^{3}}{\left[ n\right] _{p,q}}\left( p^{3}q\left( \frac{%
p^{2}+2q+3q^{2}}{q^{2}}\right) -4\left( p^{2}q+2pq^{2}\right) +6q^{2}\right) 
\notag \\
& +\frac{x^{2}}{\left[ n\right] _{p,q}^{2}}\left( pq\left(
p^{2}+3pq+3q^{2}\right) -4q^{3}\right) +\frac{x}{\left[ n\right] _{p,q}^{3}}%
q^{4}  \label{fourth mom}
\end{align}
\end{corollary}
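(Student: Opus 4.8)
The plan is to use only the linearity of $S_{n,p,q}$ together with the binomial expansion of $(t-x)^m$, so that every central moment is reduced to the ordinary moments already evaluated in Lemma \ref{MOMENTS}. Since $x$ is a constant relative to the operator, for each fixed $m$ I would write
\begin{equation*}
S_{n,p,q}\left( (t-x)^{m};x\right) =\sum_{j=0}^{m}\binom{m}{j}(-x)^{m-j}S_{n,p,q}\left( t^{j};x\right),
\end{equation*}
and then insert the explicit values $S_{n,p,q}(1;x)=1$, $S_{n,p,q}(t;x)=qx$, $S_{n,p,q}(t^{2};x)=pqx^{2}+q^{2}x/\left[ n\right] _{p,q}$, together with the formulas for $S_{n,p,q}(t^{3};x)$ and $S_{n,p,q}(t^{4};x)$ supplied by that lemma.

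For the first moment this gives $S_{n,p,q}(t-x;x)=qx-x=(q-1)x$ at once. For the second moment, expanding $(t-x)^{2}=t^{2}-2xt+x^{2}$ and substituting yields $pqx^{2}+q^{2}x/\left[ n\right] _{p,q}-2qx^{2}+x^{2}$, which I would regroup by collecting the $x^{2}$-terms into $x^{2}(pq-2q+1)$, while the term $q^{2}x/\left[ n\right] _{p,q}$ is carried through unchanged, giving \eqref{second mom}.

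The only laborious computation is \eqref{fourth mom}. Here I would expand $(t-x)^{4}=t^{4}-4xt^{3}+6x^{2}t^{2}-4x^{3}t+x^{4}$ and combine it with the four-term expressions for the moments, then sort the result according to the power of $1/\left[ n\right] _{p,q}$. The coefficient of $x^{4}$ (the $\left[ n\right] _{p,q}^{0}$ part) collects the leading contributions into $p^{6}/q^{2}-4p^{3}+6pq-4q+1$; the coefficient of $x^{3}/\left[ n\right] _{p,q}$ gathers the first-order pieces of $S_{n,p,q}(t^{4};x)$ and $S_{n,p,q}(t^{3};x)$; the coefficient of $x^{2}/\left[ n\right] _{p,q}^{2}$ combines the appropriate second-order pieces of $S_{n,p,q}(t^{4};x)$, $S_{n,p,q}(t^{3};x)$ and $S_{n,p,q}(t^{2};x)$; and the single term $q^{4}x/\left[ n\right] _{p,q}^{3}$ is inherited directly from $S_{n,p,q}(t^{4};x)$, since no lower moment contributes at that order. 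The main obstacle is entirely one of bookkeeping: keeping the many powers of $p$, $q$, and $\left[ n\right] _{p,q}$ correctly aligned so that the stated closed form emerges. No idea beyond linearity and Lemma \ref{MOMENTS} is required.
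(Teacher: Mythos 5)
Your proposal is correct and coincides with the paper's (implicit) argument: the paper gives no written proof, treating the corollary as immediate from Lemma \ref{MOMENTS} by exactly the route you describe, namely expanding $(t-x)^{m}$ binomially, applying linearity, and collecting terms by powers of $1/\left[ n\right]_{p,q}$; carrying this out does reproduce \eqref{second mom} and \eqref{fourth mom}. One tiny bookkeeping slip in your prose: $S_{n,p,q}(t^{2};x)$ contributes its $6q^{2}$ term to the $x^{3}/\left[ n\right]_{p,q}$ coefficient (via $6x^{2}\cdot q^{2}x/\left[ n\right]_{p,q}$), not to the $x^{2}/\left[ n\right]_{p,q}^{2}$ coefficient, which receives only the $pq\left(p^{2}+3pq+3q^{2}\right)$ piece from $S_{n,p,q}(t^{4};x)$ and the $-4q^{3}$ piece from $-4xS_{n,p,q}(t^{3};x)$.
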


\begin{remark}
\label{remark}For $q\in \left( 0,1\right) $ and $p\in \left( q,1\right] $ we
easily see that $\lim_{n\rightarrow \infty }\left[ n\right] _{p,q}=1/\left(
p-q\right) .$ Hence, the operators (\ref{op}) are not approximation process
with above form. In order to study convergence properties of the sequence of
($p,q)$-Sz\'{a}sz operators, we assume that $q=\left( q_{n}\right) $\ and $%
p=\left( p_{n}\right) $ such that $0<q_{n}<p_{n}\leq 1$ and $%
q_{n}\rightarrow 1,$ $p_{n}\rightarrow 1,$ $q_{n}^{n}\rightarrow a,$ $%
p_{n}^{n}\rightarrow b$ as $n\rightarrow \infty .$ We also assume that%
\begin{eqnarray*}
\lim_{n\rightarrow \infty }\left[ n\right] _{p_{n},q_{n}}\left( q-1\right) 
&=&\alpha , \\
\lim_{n\rightarrow \infty }\left[ n\right] _{p_{n},q_{n}}\left(
p_{n}q_{n}-2q_{n}+1\right)  &=&\gamma , \\
\lim_{n\rightarrow \infty }\left[ n\right] _{p_{n},q_{n}}\left( \frac{%
p_{n}^{6}}{q_{n}^{2}}-4p_{n}^{3}+6p_{n}q_{n}-4q_{n}+1\right)  &=&\beta ,
\end{eqnarray*}%
It is natural to ask the existence of such sequences $\left( q_{n}\right) $
and $\left( p_{n}\right) .$ For example, let $c,d\in \mathbb{R}^{+}$ such
that $c>d.$ If we choose $q_{n}=\frac{n}{n+c}$ and $p_{n}=\frac{n}{n+d}$
then $q_{n}\rightarrow 1,$ $p_{n}\rightarrow 1,q_{n}^{n}\rightarrow e^{-c},$ 
$p_{n}^{n}\rightarrow e^{-d},$ $\lim_{n\rightarrow \infty }\left[ n\right]
_{p,q}=\infty $ as $n\rightarrow \infty .$ Also we have $\alpha =\frac{%
a\left( e^{-d}-e^{-c}\right) }{d-c},$ $\gamma =e^{-d}-e^{-c},$ $\beta =0.$
\end{remark}

\begin{corollary}
According to Remark \ref{remark}, we immediately have%
\begin{eqnarray}
\lim_{n\rightarrow \infty }\left[ n\right] _{p_{n},q_{n}}S_{n,p_{n},q_{n}}%
\left( t-x;x\right) &=&\alpha x,  \label{s1} \\
\lim_{n\rightarrow \infty }\left[ n\right] _{p_{n},q_{n}}S_{n,p_{n},q_{n}}%
\left( \left( t-x\right) ^{2};x\right) &=&\gamma x^{2}+x,  \label{s2} \\
\lim_{n\rightarrow \infty }\left[ n\right] _{p_{n},q_{n}}S_{n,p_{n},q_{n}}%
\left( \left( t-x\right) ^{4};x\right) &=&\beta x^{4}.  \label{r1}
\end{eqnarray}
\end{corollary}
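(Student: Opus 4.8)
The plan is to read the three limits directly off the explicit central-moment formulas \eqref{second mom} and \eqref{fourth mom} from the preceding Corollary, by multiplying each expression by $\left[ n\right] _{p_{n},q_{n}}$ and passing to the limit term by term, using the three defining limits collected in Remark \ref{remark} together with the facts $p_{n},q_{n}\rightarrow 1$ and $\left[ n\right] _{p_{n},q_{n}}\rightarrow \infty$.

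For \eqref{s1} I would simply use $S_{n,p,q}\left( t-x;x\right) =\left( q-1\right) x$, so that $\left[ n\right] _{p_{n},q_{n}}S_{n,p_{n},q_{n}}\left( t-x;x\right) =\left[ n\right] _{p_{n},q_{n}}\left( q_{n}-1\right) x\rightarrow \alpha x$ by the first limit in Remark \ref{remark}. For \eqref{s2} I would multiply \eqref{second mom} by $\left[ n\right] _{p_{n},q_{n}}$ to get $\left[ n\right] _{p_{n},q_{n}}x^{2}\left( p_{n}q_{n}-2q_{n}+1\right) +q_{n}^{2}x$; the first summand tends to $\gamma x^{2}$ by the second defining limit, while the second tends to $x$ because $q_{n}\rightarrow 1$, yielding $\gamma x^{2}+x$.

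The only case needing attention is \eqref{r1}. Multiplying \eqref{fourth mom} by $\left[ n\right] _{p_{n},q_{n}}$ produces four summands: the leading term $\left[ n\right] _{p_{n},q_{n}}x^{4}\bigl( p_{n}^{6}/q_{n}^{2}-4p_{n}^{3}+6p_{n}q_{n}-4q_{n}+1\bigr)$, which converges to $\beta x^{4}$ by the third defining limit; a term $x^{3}B_{n}$ where $B_{n}=p_{n}^{3}q_{n}\bigl( (p_{n}^{2}+2q_{n}+3q_{n}^{2})/q_{n}^{2}\bigr)-4\left( p_{n}^{2}q_{n}+2p_{n}q_{n}^{2}\right) +6q_{n}^{2}$; and two residual terms carrying factors $1/\left[ n\right] _{p_{n},q_{n}}$ and $1/\left[ n\right] _{p_{n},q_{n}}^{2}$, which vanish because their numerators stay bounded while $\left[ n\right] _{p_{n},q_{n}}\rightarrow \infty$.

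The main obstacle is thus the middle term: I must verify $B_{n}\rightarrow 0$, since otherwise a stray $x^{3}$ contribution would survive and destroy the clean limit $\beta x^{4}$. Because $p_{n},q_{n}\rightarrow 1$, substituting gives $B_{n}\rightarrow \bigl((1+2+3)/1\bigr)-4(1+2)+6=6-12+6=0$, so the $x^{3}$ summand disappears. Adding the four contributions then leaves exactly $\beta x^{4}$, which completes the argument.
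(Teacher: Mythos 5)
Your proof is correct and is exactly the argument the paper intends: the paper offers no written proof beyond the phrase ``we immediately have,'' and the intended justification is precisely your term-by-term passage to the limit in the explicit central-moment formulas, using the three limits assumed in Remark \ref{remark} together with $p_{n},q_{n}\rightarrow 1$ and $\left[ n\right] _{p_{n},q_{n}}\rightarrow \infty $. Your explicit verification that the coefficient of $x^{3}$ in \eqref{r1}, namely $B_{n}\rightarrow 6-12+6=0$, is the one nontrivial detail the paper leaves unstated, and your computation of it is correct.
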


\section{Direct Results}

In this section, we present local approximation theorem for the operators $%
S_{n,p,q}.$ By $C_{B}\left[ 0,\infty \right) ,$ we denote the space of
real-valued continuous and bounded functions $f$ defined on the interval $%
\left[ 0,\infty \right) .$ The norm $\left \Vert \cdot \right \Vert $ on the
space $C_{B}\left[ 0,\infty \right) $ is given by%
\begin{equation*}
\left \Vert f\right \Vert =\sup_{0\leq x<\infty }\left \vert f\left(
x\right) \right \vert .
\end{equation*}%
Further let us consider the following $\mathcal{K}$-functional:%
\begin{equation*}
\mathcal{K}_{2}\left( f,\delta \right) =\inf_{g\in W^{2}}\left \{ \left
\Vert f-g\right \Vert +\delta \left \Vert g^{\prime \prime }\right \Vert
\right \} ,
\end{equation*}%
where $\delta >0$ and $W^{2}=\left \{ g\in C_{B}\left[ 0,\infty \right)
:g^{\prime },g^{\prime \prime }\in C_{B}\left[ 0,\infty \right) \right \} .$
By \cite[p. 177, Theorem 2.4]{devore} there exists an absolute constant $C>0$
such that%
\begin{equation}
\mathcal{K}_{2}\left( f,\delta \right) \leq C\omega _{2}\left( f,\sqrt{%
\delta }\right) ,  \label{eq21}
\end{equation}%
where%
\begin{equation*}
\omega _{2}\left( f,\delta \right) =\sup_{0<h\leq \sqrt{\delta }}\sup_{x\in %
\left[ 0,\infty \right) }\left \vert f\left( x+2h\right) -2f\left(
x+h\right) +f\left( x\right) \right \vert
\end{equation*}%
is the second order modulus of smoothness of $f\in C_{B}\left[ 0,\infty
\right) .$ The usual modulus of continuity of $f\in C_{B}\left[ 0,\infty
\right) $ is defined by%
\begin{equation*}
\omega \left( f,\delta \right) =\sup_{0<h\leq \delta }\sup_{x\in \left[
0,\infty \right) }\left \vert f\left( x+h\right) -f\left( x\right) \right
\vert .
\end{equation*}

\begin{theorem}
Let $p,q\in \left( 0,1\right) $ such that $0<q<p\leq 1.$ Then we have%
\begin{equation*}
\left \vert S_{n,p,q}\left( f;x\right) -f\left( x\right) \right \vert \leq
M\omega _{2}\left( f,\sqrt{\delta _{n}\left( x\right) }\right) +\omega
\left( f,\left( 1-q\right) x\right) ,
\end{equation*}%
for every $x\in \left[ 0,\infty \right) $ and $f\in C_{B}\left[ 0,\infty
\right) ,$ where $\delta _{n}\left( x\right) =2x^{2}\left( 1-q\right) +\frac{%
x}{\left[ n\right] _{p,q}}.$
\end{theorem}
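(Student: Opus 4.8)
The standard route for this type of local approximation estimate is to introduce an auxiliary operator that corrects the first moment, apply a Taylor expansion with integral remainder against smooth functions $g\in W^2$, and then pass from the auxiliary operator back to $S_{n,p,q}$. First I would define the modified operators
\begin{equation*}
\widehat{S}_{n,p,q}(f;x):=S_{n,p,q}(f;x)-f(qx)+f(x),
\end{equation*}
which are still linear, preserve constants, and by the first moment \eqref{mom2} satisfy $\widehat{S}_{n,p,q}(t-x;x)=S_{n,p,q}(t;x)-qx+(x-x)\cdot\!1=0$; more precisely the subtraction is engineered so that $\widehat{S}_{n,p,q}(t;x)=x$. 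This kills the linear term that would otherwise spoil the Taylor argument.

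Next, for $g\in W^2$ I would write the second-order Taylor formula with integral remainder,
\begin{equation*}
g(t)=g(x)+(t-x)g'(x)+\int_x^t (t-u)g''(u)\,du,
\end{equation*}
and apply $\widehat{S}_{n,p,q}(\cdot\,;x)$ in the $t$-variable. Since $\widehat{S}_{n,p,q}$ preserves constants and reproduces $t-x$ with value zero, the constant and linear terms vanish, leaving only the remainder term. Bounding the two pieces of the remainder coming from $S_{n,p,q}$ and from the correction $-g(qx)+g(x)$ by $\|g''\|$ times the relevant quadratic quantities, and using the second central moment \eqref{second mom} together with $|x-qx|=(1-q)x$, I expect a bound of the form
\begin{equation*}
\bigl|\widehat{S}_{n,p,q}(g;x)-g(x)\bigr|\leq \delta_n(x)\,\|g''\|,
\end{equation*}
with $\delta_n(x)=2x^2(1-q)+x/[n]_{p,q}$ emerging as the sum of the second central moment contribution and the $(1-q)^2x^2$ term from the correction (after the stated simplification $pq-2q+1\le 2(1-q)$ type estimate for $0<q<p\le 1$).

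Finally I would return to $f$ by inserting an arbitrary $g\in W^2$: using $\|\widehat{S}_{n,p,q}\|\le 3$ (from $\|S_{n,p,q}\|=1$ plus the two correction terms) one writes
\begin{align*}
|S_{n,p,q}(f;x)-f(x)|
&\le |\widehat{S}_{n,p,q}(f-g;x)|+|(f-g)(x)|+|\widehat{S}_{n,p,q}(g;x)-g(x)|\\
&\quad+|f(qx)-f(x)|\\
&\le 4\|f-g\|+\delta_n(x)\|g''\|+\omega(f,(1-q)x),
\end{align*}
where the last term absorbs the correction $|f(qx)-f(x)|$ via the definition of the modulus of continuity. Taking the infimum over $g\in W^2$ yields $4\,\mathcal{K}_2(f,\delta_n(x))+\omega(f,(1-q)x)$, and the $\mathcal{K}$-functional inequality \eqref{eq21} converts this into $M\,\omega_2(f,\sqrt{\delta_n(x)})+\omega(f,(1-q)x)$ with $M=4C$. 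The main obstacle is the bookkeeping in the second step: one must carefully track how the correction term $-g(qx)+g(x)$ contributes its own quadratic remainder $\tfrac12(1-q)^2x^2g''(\xi)$ and verify that, combined with $S_{n,p,q}((t-x)^2;x)=x^2(pq-2q+1)+q^2x/[n]_{p,q}$, the total is genuinely dominated by $\delta_n(x)$ for all admissible $p,q$ and all $x\ge 0$.
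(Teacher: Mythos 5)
Your proposal follows essentially the same route as the paper's own proof: the same auxiliary operator $\hat{S}_{n,p,q}(f;x)=S_{n,p,q}(f;x)+f(x)-f(qx)$ preserving $1$ and $t$, the same Taylor expansion with integral remainder for $g\in W^{2}$, the same bound by $\Vert g''\Vert$ times the second central moment plus $(1-q)^{2}x^{2}$ (and indeed the total is dominated by $\delta_{n}(x)$ since $p+q\leq 2$), the same $4\Vert f-g\Vert$ estimate from $\Vert \hat{S}_{n,p,q}\Vert\leq 3$, and the same passage through $\mathcal{K}_{2}$ and inequality (\ref{eq21}). The argument is correct as outlined, so nothing further is needed.
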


\begin{proof}
Let us consider the auxiliary operator%
\begin{equation}
\hat{S}_{n,p,q}\left( f;x\right) =S_{n,p,q}\left( f;x\right) +f\left(
x\right) -f\left( qx\right) .  \label{acar1}
\end{equation}%
The operators $\hat{S}_{n,p,q}$ are linear and it is clear by Lemma \ref%
{MOMENTS} that%
\begin{equation}
\hat{S}_{n,p,q}\left( 1;x\right) =S_{n,p,q}\left( 1;x\right) =1  \label{12}
\end{equation}%
and%
\begin{equation}
S_{n,p,q}\left( t;x\right) =S_{n,p,q}\left( t;x\right) +x-qx=x.  \label{13}
\end{equation}%
Let $g\in W^{2}.$ The classical Taylor's expansion of $g\in \mathcal{W}%
_{\infty }^{2}$ yields for $t\in 
\mathbb{R}
^{+}$ that%
\begin{eqnarray*}
g\left( t\right) &=&g\left( x\right) +g^{\prime }\left( x\right) \left(
t-x\right) \\
&&+\int \limits_{x}^{t}\left( t-u\right) g^{\prime \prime }\left( u\right)
du.
\end{eqnarray*}%
Using (\ref{12}) and (\ref{13}) we have%
\begin{equation}
\hat{S}_{n,p,q}\left( g;x\right) -g\left( x\right) =\hat{S}_{n,p,q}\left(
\int \limits_{x}^{t}\left( t-u\right) g^{\prime \prime }\left( u\right)
du;x\right) .  \label{14}
\end{equation}%
Hence, by (\ref{acar1}) we can write$\hat{S}_{n,p,q}\left( f;x\right)
=S_{n,p,q}\left( f;x\right) +f\left( x\right) -f\left( qx\right) $%
\begin{eqnarray}
\left \vert \hat{S}_{n,p,q}\left( g;x\right) -g\left( x\right) \right \vert
&\leq &\left \vert S_{n,p,q}\left( \int \limits_{x}^{t}\left( t-u\right)
g^{\prime \prime }\left( u\right) du;x\right) \right \vert  \notag \\
&&+S_{n,p,q}\left( \int \limits_{x}^{qx}\left \vert qx-u\right \vert
g^{\prime \prime }\left( u\right) du;x\right)  \notag \\
&\leq &\left \Vert g^{\prime \prime }\right \Vert \left[ S_{n,p,q}\left(
\left( t-x\right) ^{2};x\right) +\left( qx-x\right) ^{2}\right] .
\label{acar2}
\end{eqnarray}%
On the other hand, using (\ref{second mom}) we get%
\begin{eqnarray}
S_{n,p,q}\left( \left( t-x\right) ^{2};x\right) &=&x^{2}\left(
pq-2q+1\right) +\frac{q^{2}x}{\left[ n\right] _{p,q}}  \notag \\
&=&x^{2}\left( \left( p-1\right) q+1-q\right) +\frac{q^{2}x}{\left[ n\right]
_{p,q}}  \notag \\
&\leq &x^{2}\left( \left( 1-p\right) q+1-q\right) +\frac{q^{2}x}{\left[ n%
\right] _{p,q}}  \notag \\
&\leq &x^{2}\left( 1-pq\right) +\frac{x}{\left[ n\right] _{p,q}}.
\label{acar3}
\end{eqnarray}%
Then, by (\ref{acar2}) we have%
\begin{eqnarray*}
\left \vert \hat{S}_{n,p,q}\left( g;x\right) -g\left( x\right) \right \vert
&\leq &\left \Vert g^{\prime \prime }\right \Vert \left[ x^{2}\left(
1-pq\right) +\frac{x}{\left[ n\right] _{p,q}}+\left( qx-x\right) ^{2}\right]
\\
&\leq &\left \Vert g^{\prime \prime }\right \Vert \left[ 2x^{2}\left(
1-q\right) +\frac{x}{\left[ n\right] _{p,q}}\right] .
\end{eqnarray*}%
Also, using (\ref{acar1}), (\ref{op}) and (\ref{mom1}) we obtain%
\begin{equation*}
\left \vert \hat{S}_{n,p,q}\left( f;x\right) \right \vert \leq \left \vert
S_{n,p,q}\left( f;x\right) \right \vert +2\left \Vert f\right \Vert \leq
\left \Vert f\right \Vert S_{n,p,q}\left( 1;x\right) +2\left \Vert f\right
\Vert \leq 3\left \Vert f\right \Vert .
\end{equation*}%
Observe that%
\begin{eqnarray*}
\left \vert S_{n,p,q}\left( f;x\right) -f\left( x\right) \right \vert &\leq
&\left \vert \hat{S}_{n,p,q}\left( f-g;x\right) -\left( f-g\right) \left(
x\right) \right \vert \\
&&+\left \vert \hat{S}_{n,p,q}\left( g;x\right) -g\left( x\right) \right
\vert \\
&&+\left \vert f\left( x\right) -f\left( qx\right) \right \vert \\
&\leq &4\left \Vert f-g\right \Vert +\left \Vert g^{\prime \prime }\right
\Vert \left( 2x^{2}\left( 1-q\right) +\frac{x}{\left[ n\right] _{p,q}}\right)
\\
&&+\omega \left( f,\left( 1-q\right) x\right) .
\end{eqnarray*}%
Taking infimum on the right hand side over all $g\in \mathcal{W}_{\infty
}^{2}$ we have%
\begin{eqnarray*}
\left \vert S_{n,p,q}\left( f;x\right) -f\left( x\right) \right \vert &\leq
&4\mathcal{K}_{2}\left( f,2x^{2}\left( 1-q\right) +\frac{x}{\left[ n\right]
_{p,q}}\right) \\
&&+\omega \left( f,\left( 1-q\right) x\right) .
\end{eqnarray*}%
Using equality (\ref{eq21}) for every $q\in \left( 0,1\right) $ and $p\in
\left( q,1\right] $ we get 
\begin{eqnarray*}
\left \vert S_{n,p,q}\left( f;x\right) -f\left( x\right) \right \vert &\leq
&M\omega _{2}\left( f,\sqrt{2x^{2}\left( 1-q\right) +\frac{x}{\left[ n\right]
_{p,q}}}\right) \\
&&+\omega \left( f,\left( 1-q\right) x\right) ,
\end{eqnarray*}%
which completes the proof.
\end{proof}

\section{Rate of Convergence}

First, let us recall the definitions of weighted spaces and corresponding
modulus of continuity. Let $C\left[ 0,\infty \right) $ be the set of all
continuous functions $f$ defined on $\left[ 0,\infty \right) $ and $B_{2}%
\left[ 0,\infty \right) $ the set of all functions $f$ defined on $\left[
0,\infty \right) $ satisfying the condition $\left \vert f\left( x\right)
\right \vert \leq M\left( 1+x^{2}\right) $ with some positive constant $M$
which may depend only on $f.$ $C_{2}\left[ 0,\infty \right) $ denotes the
subspace of all continuous functions in $B_{2}\left[ 0,\infty \right) .$\ By 
$C_{2}^{\ast }\left[ 0,\infty \right) $, we denote the subspace of all
functions $f\in C_{2}\left[ 0,\infty \right) $ for which $\lim_{x\rightarrow
\infty }\frac{f\left( x\right) }{1+x^{2}}$ is finite$.$ $B_{2}\left[
0,\infty \right) $ is a linear normed space with the norm $\left \Vert
f\right \Vert _{2}=\sup_{x\geq 0}\frac{\left \vert f\left( x\right) \right
\vert }{1+x^{2}}.$ For any positive $a,$ by%
\begin{equation*}
\omega _{a}\left( f,\delta \right) =\sup_{\left \vert t-x\right \vert \leq
\delta }\sup_{x,t\in \left[ 0,a\right] }\left \vert f\left( t\right)
-f\left( x\right) \right \vert
\end{equation*}%
we denote the usual modulus of continuity of $f$ on the closed interval $%
\left[ 0,a\right] .$ For a function $f\in C_{2}\left[ 0,\infty \right) $,
the modulus of continuity $\omega _{a}\left( f,\delta \right) $ tends to
zero.

\begin{theorem}
Let $f\in C_{2}^{\ast }\left[ 0,\infty \right) ,$ $q=q_{n}\in \left(
0,1\right) $, $p=p_{n}\in \left( q,1\right] $ such that $q_{n}\rightarrow 1,$
$p_{n}\rightarrow 1$ as $n\rightarrow \infty $ and $\omega _{a+1}\left(
f,\delta \right) $ be its modulus of continuity on the finite interval $%
\left[ 0,a+1\right] \subset \left[ 0,\infty \right) .$ Then we have%
\begin{eqnarray*}
\left \Vert S_{n,p,q}\left( f;x\right) -f\left( x\right) \right \Vert _{C
\left[ 0,a\right] } &\leq &6M_{f}\left( 1+a^{2}\right) a\left( a\left(
1-pq\right) +\frac{1}{\left[ n\right] _{p,q}}\right) \\
&&+2\omega _{a+1}\left( f,a^{2}\left( 1-pq\right) +\frac{a}{\left[ n\right]
_{p,q}}\right) .
\end{eqnarray*}
\end{theorem}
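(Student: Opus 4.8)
The plan is to bound the error on $[0,a]$ by splitting the deviation $|f(t)-f(x)|$ according to whether the argument $t$ of the operator lands inside or outside the interval $[0,a+1]$, using the modulus of continuity for the near case and the growth bound $|f|\le M_f(1+x^2)$ for the far case, and then to control the resulting moment expressions via the second central moment estimate already proved in the excerpt.

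First I would recall the standard estimate for functions with a known modulus of continuity on $[0,a+1]$: for $x\in[0,a]$ and any $t\in[0,\infty)$,
\begin{equation*}
|f(t)-f(x)|\le M_f\,(2+x^2+t^2)\,\mathbf{1}_{\{|t-x|>1\}}+\omega_{a+1}\!\left(f,|t-x|\right)\mathbf{1}_{\{|t-x|\le 1\}}.
\end{equation*}
For the far part I would bound $(2+x^2+t^2)\le C(1+a^2)(t-x)^2$ whenever $|t-x|>1$ and $x\le a$, absorbing the $t^2$ growth into $(t-x)^2$. For the near part I would use the subadditivity inequality $\omega_{a+1}(f,\lambda\delta)\le(1+\lambda)\omega_{a+1}(f,\delta)$ to write $\omega_{a+1}(f,|t-x|)\le\bigl(1+|t-x|/\delta\bigr)\omega_{a+1}(f,\delta)$ for a free parameter $\delta>0$. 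Applying the positive linear operator $S_{n,p,q}$ to both pieces and invoking $S_{n,p,q}(1;x)=1$ reduces everything to $S_{n,p,q}\bigl((t-x)^2;x\bigr)$ and $S_{n,p,q}(|t-x|;x)$.

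Next I would invoke the key input from the excerpt, namely the bound
\begin{equation*}
S_{n,p,q}\bigl((t-x)^2;x\bigr)\le x^2(1-pq)+\frac{x}{[n]_{p,q}},
\end{equation*}
established in the proof of the previous theorem (inequality following the central moment formula). For $x\in[0,a]$ this is at most $a^2(1-pq)+a/[n]_{p,q}=:\delta_n$, which is exactly the argument appearing in the statement's modulus term. The first-order expression $S_{n,p,q}(|t-x|;x)$ I would dominate by Cauchy–Schwarz, $S_{n,p,q}(|t-x|;x)\le\bigl(S_{n,p,q}((t-x)^2;x)\bigr)^{1/2}\le\sqrt{\delta_n}$, so choosing $\delta=\sqrt{\delta_n}$ makes the near-part contribution collapse to a constant multiple of $\omega_{a+1}(f,\sqrt{\delta_n})$. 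A further use of subadditivity replaces $\sqrt{\delta_n}$ by $\delta_n$ at the cost of another bounded factor, matching the $\omega_{a+1}(f,a^2(1-pq)+a/[n]_{p,q})$ written in the conclusion.

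The main obstacle will be the bookkeeping of constants so that the far-part estimate lands precisely as $6M_f(1+a^2)a\bigl(a(1-pq)+1/[n]_{p,q}\bigr)$ rather than merely $O(1+a^2)\delta_n$. Pulling out one factor of $a$ from $\delta_n=a\bigl(a(1-pq)+1/[n]_{p,q}\bigr)$ and tracking the numerical constant through the indicator split, the $(2+x^2+t^2)\le C(1+a^2)(t-x)^2$ bound, and the two applications of subadditivity is where the stated coefficients $6$ and $2$ are pinned down; all of this is elementary once the central moment bound and the subadditivity of $\omega_{a+1}$ are in hand.
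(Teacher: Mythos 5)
Your plan reproduces the paper's own proof almost verbatim: the same near/far split relative to $\left[ 0,a+1\right] $, the same growth estimate $M_{f}\left( 2+x^{2}+t^{2}\right) \leq M_{f}\left( 2+3x^{2}+2\left( t-x\right) ^{2}\right) \leq 6M_{f}\left( 1+a^{2}\right) \left( t-x\right) ^{2}$ on the far part (valid because there $\left( t-x\right) ^{2}>1$), the same subadditivity bound $\omega _{a+1}\left( f,\left \vert t-x\right \vert \right) \leq \left( 1+\left \vert t-x\right \vert /\delta \right) \omega _{a+1}\left( f,\delta \right) $ on the near part, the application of the positive operator with $S_{n,p,q}\left( 1;x\right) =1$, the central moment bound $S_{n,p,q}\left( \left( t-x\right) ^{2};x\right) \leq x^{2}\left( 1-pq\right) +x/\left[ n\right] _{p,q}\leq \delta _{n}:=a^{2}\left( 1-pq\right) +a/\left[ n\right] _{p,q}$ on $\left[ 0,a\right] $, Cauchy--Schwarz to get $S_{n,p,q}\left( \left \vert t-x\right \vert ;x\right) \leq \sqrt{\delta _{n}}$, and the choice $\delta =\sqrt{\delta _{n}}$. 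Up to that point your argument is correct and yields, exactly as the paper's proof does,
\begin{equation*}
\left \Vert S_{n,p,q}f-f\right \Vert _{C\left[ 0,a\right] }\leq 6M_{f}\left( 1+a^{2}\right) a\left( a\left( 1-pq\right) +\frac{1}{\left[ n\right] _{p,q}}\right) +2\omega _{a+1}\left( f,\sqrt{\delta _{n}}\right) .
\end{equation*}

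The genuine gap is your final step, where you claim that ``a further use of subadditivity replaces $\sqrt{\delta _{n}}$ by $\delta _{n}$ at the cost of another bounded factor.'' Subadditivity runs the wrong way here: writing $\sqrt{\delta _{n}}=\lambda \delta _{n}$ with $\lambda =\delta _{n}^{-1/2}$, the inequality $\omega _{a+1}\left( f,\lambda \delta _{n}\right) \leq \left( 1+\lambda \right) \omega _{a+1}\left( f,\delta _{n}\right) $ only gives $\omega _{a+1}\left( f,\sqrt{\delta _{n}}\right) \leq \left( 1+\delta _{n}^{-1/2}\right) \omega _{a+1}\left( f,\delta _{n}\right) $, and $\delta _{n}^{-1/2}$ is unbounded precisely in the regime the theorem is about, where $\delta _{n}\rightarrow 0$; no bounded conversion factor exists, let alone one preserving the exact coefficient $2$. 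You should know, however, that this defect is not really yours: the paper's own proof also ends by choosing $\delta =\sqrt{a^{2}\left( 1-pq\right) +a/\left[ n\right] _{p,q}}$ and therefore proves the bound with $2\omega _{a+1}\left( f,\sqrt{\delta _{n}}\right) $, not the $2\omega _{a+1}\left( f,\delta _{n}\right) $ displayed in the theorem; the missing square root in the statement is evidently a typo (the analogous weighted estimates for $q$-Sz\'{a}sz and Baskakov operators carry the square root). The correct course is to delete your conversion step and state the conclusion with $\omega _{a+1}\left( f,\sqrt{a^{2}\left( 1-pq\right) +a/\left[ n\right] _{p,q}}\right) $, which is what your argument, and the paper's, actually establishes.
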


\begin{proof}
For $x\in \left[ 0,a\right] $ and $t>a+1,$ since $t-x>1$, we have%
\begin{eqnarray}
\left \vert f\left( t\right) -f\left( x\right) \right \vert &\leq
&M_{f}\left( 2+x^{2}+t^{2}\right)  \notag \\
&\leq &M_{f}\left( 2+3x^{2}+2\left( t-x\right) ^{2}\right)  \notag \\
&\leq &6M_{f}\left( 1+a^{2}\right) \left( t-x\right) ^{2}.  \label{a1}
\end{eqnarray}%
For $x\in \left[ 0,a\right] $ and $t\leq a+1,$ we have%
\begin{equation}
\left \vert f\left( t\right) -f\left( x\right) \right \vert \leq \omega
_{a+1}\left( f,\left \vert t-x\right \vert \right) \leq \left( 1+\frac{\left
\vert t-x\right \vert }{\delta }\right) \omega _{a+1}\left( f,\delta \right)
,\text{ }\delta >0.  \label{a2}
\end{equation}%
By (\ref{a1}) and (\ref{a2}), we can write%
\begin{equation*}
\left \vert f\left( t\right) -f\left( x\right) \right \vert \leq
6M_{f}\left( 1+a^{2}\right) \left( t-x\right) ^{2}+\left( 1+\frac{\left
\vert t-x\right \vert }{\delta }\right) \omega _{a+1}\left( f,\delta \right)
\end{equation*}%
for $x\in \left[ 0,a\right] $ and $t>0.$ Thus, applying the operators to
both sides of above inequality, we have%
\begin{eqnarray*}
\left \vert S_{n,p,q}\left( f;x\right) -f\left( x\right) \right \vert &\leq
&S_{n,p,q}\left( \left \vert f\left( t\right) -f\left( x\right) \right \vert
;x\right) \\
&\leq &6M_{f}\left( 1+a^{2}\right) S_{n,p,q}\left( \left( t-x\right)
^{2};x\right) \\
&&+\left( 1+\frac{\left \vert t-x\right \vert }{\delta }\right) \omega
_{a+1}\left( f,\delta \right) \left( 1+\frac{1}{\delta }S_{n,p,q}\left(
\left( t-x\right) ^{2};x\right) \right) ^{1/2}.
\end{eqnarray*}%
Hence, by Cauchy-Schwarz inequality and inequality (\ref{acar3}) we obtain$%
x^{2}\left( 1-pq\right) +\frac{x}{\left[ n\right] _{p,q}}$%
\begin{eqnarray*}
\left \vert S_{n,p,q}\left( f;x\right) -f\left( x\right) \right \vert &\leq
&6M_{f}\left( 1+a^{2}\right) \left[ x^{2}\left( 1-pq\right) +\frac{x}{\left[
n\right] _{p,q}}\right] \\
&&+\omega _{a+1}\left( f,\delta \right) \left( 1+\frac{1}{\delta }\left(
x^{2}\left( 1-pq\right) +\frac{x}{\left[ n\right] _{p,q}}\right) \right)
^{1/2} \\
&\leq &6M_{f}\left( 1+a^{2}\right) a\left( a\left( 1-pq\right) +\frac{1}{%
\left[ n\right] _{p,q}}\right) \\
&&+\omega _{a+1}\left( f,\delta \right) \left( 1+\frac{1}{\delta }\left(
a^{2}\left( 1-pq\right) +\frac{a}{\left[ n\right] _{p,q}}\right)
^{1/2}\right) .
\end{eqnarray*}%
Choosing $\delta =\sqrt{a^{2}\left( 1-pq\right) +\frac{a}{\left[ n\right]
_{p,q}}}$ we have desired result
\end{proof}

\section{Weighted Approximation By $S_{n,p,q}$}

Now we give approximation properties of the operators $S_{n,p,q}$ on the
interval $\left[ 0,\infty \right) $.

Since%
\begin{eqnarray*}
S_{n,p,q}\left( 1+t^{2};x\right)  &=&1+pqx^{2}+\frac{q^{2}x}{\left[ n\right]
_{p,q}} \\
&\leq &1+x^{2}+x
\end{eqnarray*}%
and if $x\in \left[ 0,1\right] ,$ then $x\leq 1$ and if $x\in \left(
1,\infty \right) ,$ then $x\leq x^{2}.$ Hence we have%
\begin{equation*}
S_{n,p,q}\left( 1+t^{2};x\right) \leq 2\left( 1+x^{2}\right) 
\end{equation*}%
which says that $S_{n,p,q}$ are linear positive operators acting from $C_{2}%
\left[ 0,\infty \right) $ to $B_{2}\left[ 0,\infty \right) .$ For more
details see \cite{Gadjiev1974, Gadjiev 1976}.

\textbf{Theorem. A. }Let the sequence of linear positive operators $\left(
L_{n}\right) $ acting from $C_{2}\left[ 0,\infty \right) $ to $B_{2}\left[
0,\infty \right) $ satisfy the three conditions%
\begin{equation*}
\lim_{n\rightarrow \infty }\left \Vert L_{n}e_{i}-e_{i}\right \Vert _{2}=0,%
\text{ }i=0,1,2.
\end{equation*}%
Then for any function $f\in C_{2}^{\ast }\left[ 0,\infty \right) $%
\begin{equation*}
\lim_{n\rightarrow \infty }\left \Vert L_{n}f-f\right \Vert _{2}=0.
\end{equation*}

\begin{theorem}
\label{uniform theo}Let $q=q_{n}\in \left( 0,1\right) $, $p=p_{n}\in \left(
q,1\right] $ such that $q_{n}\rightarrow 1,$ $p_{n}\rightarrow 1$ as $%
n\rightarrow \infty .$ Then for each function $f\in C_{2}^{\ast }\left[
0,\infty \right) $ we get%
\begin{equation*}
\lim_{n\rightarrow \infty }\left \Vert S_{n,p_{n},q_{n}}f-f\right \Vert
_{2}=0.
\end{equation*}
\end{theorem}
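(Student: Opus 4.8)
The plan is to reduce the statement to the weighted Korovkin criterion recorded above as Theorem A. Since it has already been verified that $S_{n,p,q}$ maps $C_{2}\left[ 0,\infty \right)$ into $B_{2}\left[ 0,\infty \right)$, it suffices to establish the three conditions $\lim_{n\rightarrow \infty }\left\Vert S_{n,p_{n},q_{n}}e_{i}-e_{i}\right\Vert _{2}=0$ for $i=0,1,2$, where $e_{i}(x)=x^{i}$; Theorem A then delivers the conclusion for every $f\in C_{2}^{\ast }\left[ 0,\infty \right)$. Throughout I would use the explicit moments of Lemma \ref{MOMENTS} together with the two elementary suprema
\[
\sup_{x\geq 0}\frac{x}{1+x^{2}}=\frac{1}{2},\qquad \sup_{x\geq 0}\frac{x^{2}}{1+x^{2}}=1.
\]

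The case $i=0$ is immediate: by (\ref{mom1}) we have $S_{n,p,q}\left( 1;x\right) =1$, so $\left\Vert S_{n,p_{n},q_{n}}e_{0}-e_{0}\right\Vert _{2}=0$. For $i=1$, (\ref{mom2}) gives $S_{n,p,q}\left( t;x\right) -x=\left( q-1\right) x$, whence
\[
\left\Vert S_{n,p_{n},q_{n}}e_{1}-e_{1}\right\Vert _{2}=\left\vert q_{n}-1\right\vert \sup_{x\geq 0}\frac{x}{1+x^{2}}=\frac{\left\vert q_{n}-1\right\vert }{2},
\]
which tends to $0$ since $q_{n}\rightarrow 1$.

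The decisive case is $i=2$. By (\ref{mom3}),
\[
S_{n,p,q}\left( t^{2};x\right) -x^{2}=\left( p_{n}q_{n}-1\right) x^{2}+\frac{q_{n}^{2}x}{\left[ n\right] _{p_{n},q_{n}}},
\]
so that, dividing by $1+x^{2}$, taking the supremum, and applying the triangle inequality together with the two suprema above, one obtains
\[
\left\Vert S_{n,p_{n},q_{n}}e_{2}-e_{2}\right\Vert _{2}\leq \left\vert p_{n}q_{n}-1\right\vert +\frac{q_{n}^{2}}{2\left[ n\right] _{p_{n},q_{n}}}.
\]
The first term vanishes because $p_{n},q_{n}\rightarrow 1$ forces $p_{n}q_{n}\rightarrow 1$. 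The step I expect to be the main obstacle is controlling the second term: one must know that $\left[ n\right] _{p_{n},q_{n}}\rightarrow \infty $, so that $q_{n}^{2}/\left[ n\right] _{p_{n},q_{n}}\rightarrow 0$. This is the only genuinely non-formal point and the place where more than linearity and positivity is used; it is the standing property of the sequences $\left( p_{n}\right) ,\left( q_{n}\right) $ considered here, as illustrated by the example in Remark \ref{remark}. Granting this, all three Korovkin conditions hold, and an appeal to Theorem A completes the proof.
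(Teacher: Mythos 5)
Your proposal is correct and follows essentially the same route as the paper: reduction to Theorem A and verification of the three test-function conditions from the explicit moments (\ref{mom1})--(\ref{mom3}), differing only in that you evaluate $\sup_{x\geq 0}x/(1+x^{2})=1/2$ exactly where the paper simply bounds this supremum by $1$. The point you flag as the real obstacle, namely that $\left[ n\right] _{p_{n},q_{n}}\rightarrow \infty $ does not follow from $p_{n},q_{n}\rightarrow 1$ alone, is indeed assumed silently in the paper's proof as well (it holds under the additional hypotheses of Remark \ref{remark}), so your treatment is, if anything, the more careful one.
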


\begin{proof}
According to Theorem A, it is sufficient to verify the following three
conditions%
\begin{equation}
\lim_{n\rightarrow \infty }\left \Vert S_{n,p_{n},q_{n}}e_{i}-e_{i}\right
\Vert _{2}=0,\text{ }i=0,1,2.  \label{t1}
\end{equation}%
By (\ref{mom1}), (\ref{t1}) holds for $i=0.$ By (\ref{mom2}) and (\ref{mom3}%
) we have%
\begin{equation*}
\left \Vert S_{n,p_{n},q_{n}}e_{1}-e_{1}\right \Vert _{2}=\sup_{x\geq 0}%
\frac{\left( 1-q_{n}\right) x}{1+x^{2}}\leq 1-q_{n}
\end{equation*}%
and%
\begin{equation*}
\left \Vert S_{n,p_{n},q_{n}}e_{2}-e_{2}\right \Vert _{2}=\sup_{x\geq 0}%
\frac{\left( 1-p_{n}q_{n}\right) x^{2}+\frac{q_{n}^{2}x}{\left[ n\right]
_{p_{n},qn}}}{1+x^{2}}\leq \left( 1-p_{n}q_{n}\right) +\frac{1}{\left[ n%
\right] _{p_{n},q_{n}}}.
\end{equation*}%
Last two inequality mean that (\ref{t1}) holds for $i=1,2.$ By Theorem A,
the proof is completed.
\end{proof}

The weighted modulus of continuity is denoted by $\Omega \left( f;\delta
\right) $ and given by 
\begin{equation}
\Omega \left( f;\delta \right) =\sup_{0\leq h<\delta ,\text{ }x\in \left[
0,\infty \right) }\frac{\left \vert f\left( x+h\right) -f\left( x\right)
\right \vert }{\left( 1+h^{2}\right) \left( 1+x^{2}\right) }  \label{weight1}
\end{equation}%
for $f\in C_{2}\left[ 0,\infty \right) $. We know that for every $f\in
C_{2}^{\ast }\left[ 0,\infty \right) ,$ $\Omega \left( .;\delta \right) $%
\textbf{\ }has the properties

\begin{equation*}
\lim_{\delta \rightarrow 0}\Omega \left( f;\delta \right) =0
\end{equation*}%
and 
\begin{equation}
\Omega \left( f;\lambda \delta \right) \leq 2\left( 1+\lambda \right) \left(
1+\delta ^{2}\right) \Omega \left( f;\delta \right) ,\text{ \ }\lambda >0.
\label{weight2}
\end{equation}%
For $f\in C_{2}\left[ 0,\infty \right) ,$ from (\ref{weight1}) and (\ref%
{weight2}) we can write

\begin{align}
\left \vert f\left( t\right) -f\left( x\right) \right \vert & \leq \left(
1+\left( t-x\right) ^{2}\right) \left( 1+x^{2}\right) \Omega \left(
f;\left \vert t-x\right \vert \right)   \label{weight3} \\
& \leq 2\left( 1+\frac{\left \vert t-x\right \vert }{\delta }\right) \left(
1+\delta ^{2}\right) \Omega \left( f;\delta \right) \left( 1+\left(
t-x\right) ^{2}\right) \left( 1+x^{2}\right) .  \notag
\end{align}%
All concepts mentioned above can be found in \cite{nurhayat}.

\begin{theorem}
Let $0<q=q_{n}<p=p_{n}\leq 1$ such that $q_{n}\rightarrow 1,$ $%
p_{n}\rightarrow 1$ as $n\rightarrow \infty .$ Then for $f\in C_{2}^{\ast }%
\left[ 0,\infty \right) $ there exists a positive constant $A$ such that the
inequality 
\begin{equation*}
\sup_{x\in \left[ 0,\infty \right) }\frac{\left \vert S_{n,p,q}\left(
f;x\right) -f\left( x\right) \right \vert }{\left( 1+x^{2}\right) ^{5/2}}\leq
A\Omega \left( f;1/\sqrt{\beta _{p,q}\left( n\right) }\right) 
\end{equation*}%
holds, where $\beta _{p,q}\left( n\right) =\max \left \{ 1-pq,1/\left[ n%
\right] _{p,q}\right \} $ and $A$ is a positive constant.
\end{theorem}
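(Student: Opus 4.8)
The plan is to run the classical weighted-approximation scheme: estimate the pointwise error by the weighted modulus of continuity and then reduce everything to the central moments already recorded. First I would fix $x\in[0,\infty)$, regard $t$ as the running variable, and invoke inequality (\ref{weight3}), which gives
\[
|f(t)-f(x)|\leq 2\left(1+\frac{|t-x|}{\delta}\right)(1+\delta^2)\,\Omega(f;\delta)\,\bigl(1+(t-x)^2\bigr)\bigl(1+x^2\bigr).
\]
Applying the positive linear operator $S_{n,p,q}$ in the variable $t$ and using $S_{n,p,q}(1;x)=1$ from (\ref{mom1}), the factor $(1+\delta^2)\Omega(f;\delta)(1+x^2)$ pulls out and I am left to estimate $S_{n,p,q}\bigl[(1+\tfrac{|t-x|}{\delta})(1+(t-x)^2);x\bigr]$. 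Expanding the product produces four pieces: the constant $1$, the second central moment $S_{n,p,q}((t-x)^2;x)$, and the two odd moments $\tfrac1\delta S_{n,p,q}(|t-x|;x)$ and $\tfrac1\delta S_{n,p,q}(|t-x|^3;x)$.

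The second step is to control each piece by a power of $\beta_{p,q}(n)$ and a power of $(1+x^2)$. For the quadratic term I would use the estimate (\ref{acar3}), $S_{n,p,q}((t-x)^2;x)\leq x^2(1-pq)+x/[n]_{p,q}$, together with $x\leq 1+x^2$, $x^2\leq 1+x^2$ and the defining inequalities $1-pq\leq\beta_{p,q}(n)$, $1/[n]_{p,q}\leq\beta_{p,q}(n)$, to obtain $S_{n,p,q}((t-x)^2;x)\leq 2\beta_{p,q}(n)(1+x^2)$. The two odd moments I would dispatch by the Cauchy--Schwarz inequality, writing $S_{n,p,q}(|t-x|;x)\leq\bigl(S_{n,p,q}((t-x)^2;x)\bigr)^{1/2}$ and $S_{n,p,q}(|t-x|^3;x)\leq\bigl(S_{n,p,q}((t-x)^2;x)\bigr)^{1/2}\bigl(S_{n,p,q}((t-x)^4;x)\bigr)^{1/2}$, so that only the second and fourth central moments remain.

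The decisive estimate is therefore a clean bound on the fourth central moment of the form $S_{n,p,q}((t-x)^4;x)\leq C\,\beta_{p,q}(n)(1+x^2)^2$. Reading off the explicit formula (\ref{fourth mom}), the coefficients of $x^2$ and of $x$ carry factors $1/[n]_{p,q}^2$ and $1/[n]_{p,q}^3$ and are thus automatically $O\bigl(\beta_{p,q}(n)^2\bigr)$; this is routine. The part I expect to be the main obstacle is the leading coefficient $\tfrac{p^6}{q^2}-4p^3+6pq-4q+1$ of $x^4$ together with the coefficient of $x^3$: each must be shown to be $O(1-pq)$, hence $O(\beta_{p,q}(n))$. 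The key observation that makes this work is that both of these polynomial expressions vanish at $p=q=1$; since they are smooth and $1-pq\geq\max\{1-p,1-q\}$ on the admissible region $0<q<p\leq1$, each is $O\bigl((1-p)+(1-q)\bigr)=O(1-pq)$, which suffices.

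Finally I would assemble the estimates and choose $\delta=\sqrt{\beta_{p,q}(n)}$, so that $1+\delta^2\leq 2$ and the factors $1/\delta$ precisely absorb the surplus powers of $\sqrt{\beta_{p,q}(n)}$ produced by the square roots of the moments. Tracking the powers of $(1+x^2)$, the three nonconstant pieces inside the bracket contribute at most $(1+x^2)$, $(1+x^2)^{1/2}$ and $(1+x^2)^{3/2}$ respectively, so the whole bracket is $O\bigl((1+x^2)^{3/2}\bigr)$. Multiplying by the outer weight $(1+x^2)$ gives $|S_{n,p,q}(f;x)-f(x)|\leq A\,\Omega\bigl(f;\sqrt{\beta_{p,q}(n)}\bigr)(1+x^2)^{5/2}$; dividing by $(1+x^2)^{5/2}$ and taking the supremum over $x\geq 0$ yields the asserted inequality.
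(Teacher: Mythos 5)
Your proposal is correct, and its skeleton is the same as the paper's: monotonicity plus (\ref{weight3}), expansion into the four terms, Cauchy--Schwarz on the two odd central moments, the bound $S_{n,p,q}\left( \left( t-x\right) ^{2};x\right) \leq C\beta _{p,q}\left( n\right) \left( 1+x^{2}\right) $ coming from (\ref{acar3}), the choice $\delta =\sqrt{\beta _{p,q}\left( n\right) }$, and the same power counting giving the weight $\left( 1+x^{2}\right) ^{5/2}$.

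The one place you genuinely diverge is the fourth central moment, which you single out as ``the decisive estimate'' and ``the main obstacle.'' You prove $S_{n,p,q}\left( \left( t-x\right) ^{4};x\right) \leq C\beta _{p,q}\left( n\right) \left( 1+x^{2}\right) ^{2}$ by observing that the $x^{4}$- and $x^{3}$-coefficients in (\ref{fourth mom}) vanish at $p=q=1$ and are therefore $O\left( \left( 1-p\right) +\left( 1-q\right) \right) =O\left( 1-pq\right) $ by smoothness; this is valid (at least for $n$ large enough that $\left( p_{n},q_{n}\right) $ lies in a fixed neighbourhood of $\left( 1,1\right) $ where a gradient bound is uniform), but it is strictly more than is needed, and the paper does not prove it. In the paper the term $\frac{1}{\delta }\sqrt{S_{n,p,q}\left( \left( t-x\right) ^{4};x\right) }\sqrt{S_{n,p,q}\left( \left( t-x\right) ^{2};x\right) }$ only requires the crude bound $\sqrt{S_{n,p,q}\left( \left( t-x\right) ^{4};x\right) }\leq A_{3}\left( 1+x^{2}\right) $, i.e.\ mere uniform boundedness of the coefficients in (\ref{fourth mom}) (immediate since $p_{n},q_{n}\rightarrow 1$), because the factor $1/\delta =\beta _{p,q}\left( n\right) ^{-1/2}$ is already neutralized by the $\sqrt{\beta _{p,q}\left( n\right) }$ carried by $\sqrt{S_{n,p,q}\left( \left( t-x\right) ^{2};x\right) }$; your stronger estimate merely leaves a spare factor $\sqrt{\beta _{p,q}\left( n\right) }\leq 1$. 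A final cosmetic remark: with $\delta =\sqrt{\beta _{p,q}\left( n\right) }$ you end up with $\Omega \left( f;\sqrt{\beta _{p,q}\left( n\right) }\right) $, whereas the statement is written with $\Omega \left( f;1/\sqrt{\beta _{p,q}\left( n\right) }\right) $; your conclusion does imply the stated one, since $\Omega \left( f;\cdot \right) $ is nondecreasing and $\sqrt{\beta _{p,q}\left( n\right) }\leq 1\leq 1/\sqrt{\beta _{p,q}\left( n\right) }$, and the paper's own proof contains exactly the same discrepancy.
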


\begin{proof}
Since $S_{n,p,q}\left( 1;x\right) =1$ and using the monotonicity of $%
S_{n,p,q}$ we can write%
\begin{equation*}
\left \vert S_{n,p,q}\left( f;x\right) -f\left( x\right) \right \vert \leq
S_{n,p,q}\left( \left \vert f\left( t\right) -f\left( x\right) \right \vert
;x\right) .
\end{equation*}%
On the other hand, we have from (\ref{weight3}) that%
\begin{eqnarray*}
&&\left \vert S_{n,p,q}\left( f;x\right) -f(x)\right \vert  \\
&\leq &2\left( 1+\delta ^{2}\right) \Omega \left( f;\delta \right) \left(
1+x^{2}\right) \left[ S_{n,p,q}\left( \left( 1+\frac{\left \vert
t-x\right \vert }{\delta }\right) \left( 1+\left( t-x\right) ^{2}\right)
;x\right) \right]  \\
&\leq &2\left( 1+\delta ^{2}\right) \Omega \left( f;\delta \right) \left(
1+x^{2}\right) \left \{ S_{n,p,q}\left( 1;x\right) +S_{n,p,q}\left( \left(
t-x\right) ^{2};x\right) \right.  \\
&&\left. +\frac{1}{\delta }S_{n,p,q}\left( \left \vert t-x\right \vert
;x\right) +\frac{1}{\delta }S_{n,p,q}\left( \left \vert t-x\right \vert \left(
t-x\right) ^{2};x\right) \right \} .
\end{eqnarray*}%
Using Cauchy-Schwarz inequality, we can write%
\begin{eqnarray*}
\left \vert S_{n,p,q}\left( f;x\right) -f\left( x\right) \right \vert  &\leq
&2\left( 1+\delta ^{2}\right) \Omega \left( f;\delta \right) \left(
1+x^{2}\right) \left \{ S_{n,p,q}\left( 1;x\right) +S_{n,p,q}\left( \left(
t-x\right) ^{2};x\right) \right.  \\
&&\left. +\frac{1}{\delta }\sqrt{S_{n,p,q}\left( \left( t-x\right)
^{2};x\right) }+\frac{1}{\delta }\sqrt{S_{n,p,q}\left( \left( t-x\right)
^{4};x\right) }\sqrt{S_{n,p,q}\left( \left( t-x\right) ^{2};x\right) }%
\right \} .
\end{eqnarray*}%
On the other hand, using (\ref{second mom}) we have%
\begin{eqnarray*}
S_{n,p,q}\left( \left( t-x\right) ^{2};x\right)  &\leq &x^{2}\left(
1-pq\right) +\frac{x}{\left[ n\right] _{p,q}} \\
&\leq &C_{1}\mathcal{O}\left( \beta _{p,q}\left( n\right) \right) \left(
1+x^{2}\right) ,
\end{eqnarray*}%
where $C_{1}>0$ and $\beta _{p,q}\left( n\right) =\max \left \{ 1-pq,1/\left[
n\right] _{p,q}\right \} .$ Since $\lim_{n\rightarrow \infty }p_{n}q_{n}=1$ $%
\lim_{n\rightarrow \infty }1/\left[ n\right] _{p_{n},q_{n}}=0,$ there exists
a positive constant $A_{2}$ such that%
\begin{equation*}
S_{n,p,q}\left( \left( t-x\right) ^{2};x\right) \leq A_{2}\left(
1+x^{2}\right) .
\end{equation*}%
Also, using (\ref{fourth mom}) we get%
\begin{equation*}
\left( S_{n,p,q}\left( \left( t-x\right) ^{4};x\right) \right) ^{1/2}\leq
A_{3}\left( 1+x^{2}\right) 
\end{equation*}%
and%
\begin{equation*}
S_{n,p,q}\left( \frac{\left( t-x\right) ^{2}}{\delta ^{2}};x\right)
^{1/2}\leq \frac{A_{4}}{\delta }\mathcal{O}\left( \beta _{p,q}\left(
n\right) \right) ^{1/2}\left( 1+x^{2}\right) ^{1/2}
\end{equation*}%
for $A_{3}>0$ and $A_{4}>0.$ So we have%
\begin{eqnarray*}
\left \vert S_{n,p,q}\left( f;x\right) -f\left( x\right) \right \vert  &\leq
&2\left( 1+\frac{1}{\beta _{p,q}\left( n\right) }\right) \Omega \left( f;1/%
\sqrt{\beta _{p,q}\left( n\right) }\right) \left( 1+x^{2}\right) \left \{
1+A_{2}\left( 1+x^{2}\right) \right.  \\
&&\left. +\frac{A_{4}}{\delta }\mathcal{O}\left( \beta _{p,q}\left( n\right)
\right) ^{1/2}\left( 1+x^{2}\right) ^{1/2}+A_{3}\left( 1+x^{2}\right) \frac{%
A_{4}}{\delta }\mathcal{O}\left( \beta _{p,q}\left( n\right) \right)
^{1/2}\left( 1+x^{2}\right) ^{1/2}\right \} 
\end{eqnarray*}%
Choosing $\delta =\beta _{p,q}\left( n\right) ^{1/2},$we obtain%
\begin{eqnarray*}
\left \vert S_{n,p,q}\left( f;x\right) -f\left( x\right) \right \vert  &\leq
&2\left( 1+\beta _{p,q}\left( n\right) \right) \Omega \left( f;1/\sqrt{\beta
_{p,q}\left( n\right) }\right) \left( 1+x^{2}\right) \left \{ 1+A_{2}\left(
1+x^{2}\right) \right.  \\
&&\left. +CA_{4}\left( 1+x^{2}\right) ^{1/2}+C_{1}A_{3}A_{4}\left(
1+x^{2}\right) ^{3/2}\right \} .
\end{eqnarray*}%
For $0<q<p\leq 1,$ $\beta _{p,q}\left( n\right) \leq 1.$ Hence we can write%
\begin{equation*}
\sup_{x\in \left[ 0,\infty \right) }\frac{\left \vert S_{n,p,q}\left(
f;x\right) -f\left( x\right) \right \vert }{\left( 1+x^{2}\right) ^{5/2}}\leq
A\Omega \left( f;1/\sqrt{\beta _{p,q}\left( n\right) }\right) ,
\end{equation*}%
where $A=4\left( 1+A_{2}+CA_{4}+C_{1}A_{3}A_{4}\right) ,$ hence the result
follows.
\end{proof}

\section{Voronovskaya Theorem for $S_{n,p,q}$}

Here we give Voronovskaya theorem for $S_{n,p,q}.$

\begin{theorem}
Let $0<q_{n}<p_{n}\leq 1$ such that $q_{n}\rightarrow 1,$ $p_{n}\rightarrow
1,$ $q_{n}^{n}\rightarrow a,$ $p_{n}^{n}\rightarrow b$ as $n\rightarrow
\infty .$ For any $f\in C_{2}^{\ast }\left[ 0,\infty \right) $ such that $%
f^{\prime }$, $f^{\prime \prime }\in C_{2}^{\ast }\left[ 0,\infty \right) $
we have 
\begin{equation*}
\lim_{n\rightarrow \infty }\left[ n\right] _{p_{n},q_{n}}\left[
S_{n,p_{n},q_{n}}\left( f;x\right) -f\left( x\right) \right] =\alpha
xf^{\prime }\left( x\right) +\left( \gamma x^{2}+x\right) f^{\prime \prime
}\left( x\right) 
\end{equation*}%
uniformly on any $\left[ 0,A\right] ,$ $A>0.$
\end{theorem}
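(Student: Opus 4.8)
The plan is to combine a quantitative second-order Taylor expansion of $f$ with the scaled central-moment asymptotics already recorded in \eqref{s1}--\eqref{r1}. Fix $x\in\left[0,A\right]$. Since $f''$ exists and $f,f',f''\in C_{2}^{\ast}\left[0,\infty\right)$, Taylor's theorem gives, for every $t\geq0$,
\[
f\left(t\right)=f\left(x\right)+f'\left(x\right)\left(t-x\right)+\tfrac{1}{2}f''\left(x\right)\left(t-x\right)^{2}+h\left(t,x\right)\left(t-x\right)^{2},
\]
where, writing the remainder in integral form, $h\left(t,x\right)=\int_{0}^{1}\left(1-s\right)\bigl[f''\left(x+s\left(t-x\right)\right)-f''\left(x\right)\bigr]\,ds$ for $t\neq x$ and $h\left(x,x\right)=0$. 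Two facts about $h$ will drive everything: first, since $f''$ is uniformly continuous on $\left[0,A+1\right]$, we have $\left|h\left(t,x\right)\right|\leq\tfrac{1}{2}\omega_{A+1}\left(f'',\left|t-x\right|\right)\to0$ as $t\to x$, uniformly for $x\in\left[0,A\right]$; second, because $f''\in C_{2}^{\ast}\left[0,\infty\right)$, one gets a growth bound $\left|h\left(t,x\right)\right|\leq M\left(1+t^{2}\right)$ uniformly on $\left[0,A\right]$.

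Applying the positive linear operator $S_{n,p,q}$, using its linearity and $S_{n,p,q}\left(1;x\right)=1$, I would write
\[
\left[n\right]_{p,q}\bigl(S_{n,p,q}\left(f;x\right)-f\left(x\right)\bigr)=f'\left(x\right)\left[n\right]_{p,q}S_{n,p,q}\left(t-x;x\right)+\tfrac{1}{2}f''\left(x\right)\left[n\right]_{p,q}S_{n,p,q}\left(\left(t-x\right)^{2};x\right)+\left[n\right]_{p,q}S_{n,p,q}\bigl(h\left(t,x\right)\left(t-x\right)^{2};x\bigr).
\]
By \eqref{s1} the first term tends to $\alpha xf'\left(x\right)$ uniformly on $\left[0,A\right]$, and by \eqref{s2} the second term is governed by $\left[n\right]_{p,q}S_{n,p,q}\left(\left(t-x\right)^{2};x\right)\to\gamma x^{2}+x$ and yields the second-order contribution of the statement. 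Hence the whole assertion reduces to showing that the remainder term tends to $0$ uniformly on $\left[0,A\right]$.

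For the remainder I would invoke the Cauchy--Schwarz inequality for $S_{n,p,q}$,
\[
\left[n\right]_{p,q}\bigl|S_{n,p,q}\bigl(h\left(t,x\right)\left(t-x\right)^{2};x\bigr)\bigr|\leq\sqrt{S_{n,p,q}\bigl(h^{2}\left(t,x\right);x\bigr)}\,\sqrt{\left[n\right]_{p,q}^{2}S_{n,p,q}\left(\left(t-x\right)^{4};x\right)}.
\]
The second factor stays bounded on $\left[0,A\right]$: multiplying the expansion \eqref{fourth mom} by $\left[n\right]_{p,q}^{2}$, the $x^{4}$- and $x^{3}$-coefficients vanish as $p_{n},q_{n}\to1$ at exactly the rate making \eqref{r1} finite, while the $x^{2}$-coefficient converges to a finite limit, so $\left[n\right]_{p,q}^{2}S_{n,p,q}\left(\left(t-x\right)^{4};x\right)$ is uniformly bounded.

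The first factor is the main obstacle: I must prove $S_{n,p,q}\bigl(h^{2}\left(t,x\right);x\bigr)\to0$ uniformly on $\left[0,A\right]$. Given $\varepsilon>0$, I would split the operator according to $\left|t-x\right|<\delta$ and $\left|t-x\right|\geq\delta$. On the near region the uniform smallness of $h$ gives $h^{2}\left(t,x\right)<\varepsilon$, so that part is $\leq\varepsilon S_{n,p,q}\left(1;x\right)=\varepsilon$. On the far region I use the growth bound $h^{2}\left(t,x\right)\leq M^{2}\left(1+t^{2}\right)^{2}$ together with $1\leq\delta^{-2}\left(t-x\right)^{2}$; for $x\in\left[0,A\right]$ this reduces the far part to a combination of central moments (of order up to the sixth) that, as in \eqref{second mom} and \eqref{fourth mom}, are $O\bigl(1/\left[n\right]_{p,q}\bigr)$ and hence vanish as $n\to\infty$. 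Letting $n\to\infty$ first and then $\varepsilon\to0$ yields $S_{n,p,q}\bigl(h^{2}\left(t,x\right);x\bigr)\to0$, uniformly in $x\in\left[0,A\right]$, which finishes the argument. The delicate point throughout is maintaining uniformity in $x\in\left[0,A\right]$ while $t$ ranges over all of $\left[0,\infty\right)$, and this is precisely why the uniform control of $h$ near the diagonal and the exact order of the scaled moments are indispensable.
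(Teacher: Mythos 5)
Your proposal follows the same route as the paper's proof: Taylor's formula with Peano remainder $h\left( t,x\right) $, the scaled limits \eqref{s1} and \eqref{s2} for the linear and quadratic terms, and Cauchy--Schwarz to dispose of $\left[ n\right] _{p,q}S_{n,p,q}\left( h\left( t,x\right) \left( t-x\right) ^{2};x\right) $. The genuine gap is your claim that $\left[ n\right] _{p,q}^{2}S_{n,p,q}\left( \left( t-x\right) ^{4};x\right) $ is uniformly bounded on $\left[ 0,A\right] $. Under the paper's standing hypotheses (Remark \ref{remark}) one only assumes $\left[ n\right] _{p_{n},q_{n}}\left( p_{n}^{6}/q_{n}^{2}-4p_{n}^{3}+6p_{n}q_{n}-4q_{n}+1\right) \rightarrow \beta $, i.e. the $x^{4}$-coefficient in \eqref{fourth mom} is $O\left( 1/\left[ n\right] _{p,q}\right) $, \emph{not} $O\left( 1/\left[ n\right] _{p,q}^{2}\right) $. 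Consequently
\begin{equation*}
\left[ n\right] _{p,q}^{2}S_{n,p,q}\left( \left( t-x\right) ^{4};x\right) =\left[ n\right] _{p,q}\cdot \left[ n\right] _{p,q}S_{n,p,q}\left( \left( t-x\right) ^{4};x\right) \sim \left[ n\right] _{p,q}\beta x^{4},
\end{equation*}
which diverges whenever $\beta >0$ and $x>0$; boundedness would require the stronger, unproved estimate $S_{n,p,q}\left( \left( t-x\right) ^{4};x\right) =O\left( 1/\left[ n\right] _{p,q}^{2}\right) $ (true for the particular sequences in Remark \ref{remark}, where $\beta =0$, but not a consequence of \eqref{r1}). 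Your remainder bound is therefore of the indeterminate form $o\left( 1\right) \cdot O\left( \sqrt{\left[ n\right] _{p,q}}\right) $. In fairness, the paper's own deduction of \eqref{h1} from \eqref{k1}, \eqref{k2} and \eqref{r1} conceals exactly the same difficulty: it too is conclusive only if one additionally shows either $\left[ n\right] _{p,q}S_{n,p,q}\left( h^{2}\left( t,x\right) ;x\right) \rightarrow 0$ or the $O\left( 1/\left[ n\right] _{p,q}^{2}\right) $ bound on the fourth central moment; your write-up makes this hidden step explicit, and false as stated.

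Two further mismatches. First, your self-contained proof that $S_{n,p,q}\left( h^{2}\left( t,x\right) ;x\right) \rightarrow 0$ requires, on the region $\left \vert t-x\right \vert \geq \delta $, the bound $h^{2}\left( t,x\right) \leq M^{2}\left( 1+t^{2}\right) ^{2}$ multiplied by $\left( t-x\right) ^{2}/\delta ^{2}$, hence moments of $S_{n,p,q}$ up to order six; these are computed neither in the paper nor by you, and their asserted $O\left( 1/\left[ n\right] _{p,q}\right) $ behaviour is not proved. The paper avoids this entirely by observing that $h^{2}\left( \cdot ,x\right) \in C_{2}^{\ast }\left[ 0,\infty \right) $ with $h^{2}\left( x,x\right) =0$ and invoking the weighted convergence result (Theorem \ref{uniform theo}); you could simply cite that instead. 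Second, you correctly keep the factor $\tfrac{1}{2}$ in Taylor's formula, but then your second term converges to $\tfrac{1}{2}\left( \gamma x^{2}+x\right) f^{\prime \prime }\left( x\right) $, which is half of what the theorem asserts, so you cannot claim it ``yields the second-order contribution of the statement.'' (The paper silently drops this $\tfrac{1}{2}$ right after its Taylor display, so its statement and proof also disagree by this factor; the discrepancy should be flagged rather than absorbed.)
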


\begin{proof}
Let $f,$ $f^{\prime },$ $f^{\prime \prime }\in C_{2}^{\ast }\left[ 0,\infty
\right) $ and $x\in \left[ 0,\infty \right) .$ By the Taylor formula we can
write%
\begin{equation}
f\left( t\right) =f\left( x\right) +f^{\prime }\left( x\right) \left(
t-x\right) +\frac{1}{2}f^{\prime \prime }\left( x\right) \left( t-x\right)
^{2}+h\left( t,x\right) \left( t-x\right) ^{2},  \label{eq5}
\end{equation}%
where $h\left( t,x\right) $ is Peano form of the remainder. The function $%
h\left( \cdot ,x\right) \in C_{2}^{\ast }\left[ 0,\infty \right) $ and for $%
n $ large enough $\lim_{t\rightarrow x}h\left( t,x\right) =0.$

Applying the operators (\ref{op}) to both sides of (\ref{eq5}) we get%
\begin{eqnarray*}
\left[ n\right] _{p_{n},q_{n}}\left[ S_{n,p_{n},q_{n}}\left( f;x\right)
-f\left( x\right) \right]  &=&\left[ n\right] _{p_{n},q_{n}}f^{\prime
}\left( x\right) S_{n,p_{n},q_{n}}\left( t-x;x\right) +\left[ n\right]
_{p_{n},q_{n}}f^{\prime \prime }\left( x\right) S_{n,p_{n},q_{n}}\left(
\left( t-x\right) ^{2};x\right)  \\
&&+S_{n,p_{n},q_{n}}\left( h\left( t,x\right) \left( t-x\right)
^{2};x\right) .
\end{eqnarray*}%
By the Cauchy-Schwarz inequality, we have%
\begin{equation}
S_{n,p_{n},q_{n}}\left( h\left( t,x\right) \left( t-x\right) ^{2};x\right)
\leq \sqrt{S_{n,p_{n},q_{n}}\left( h^{2}\left( t,x\right) ;x\right) }\sqrt{%
S_{n,p_{n},q_{n}}\left( \left( t-x\right) ^{4};x\right) }.  \label{k1}
\end{equation}%
Observe that $h^{2}\left( x,x\right) =0$ and $h^{2}\left( \cdot ,x\right)
\in C_{2}^{\ast }\left[ 0,\infty \right) .$ Then it follows from Theorem \ref%
{uniform theo} that%
\begin{equation}
\lim_{n\rightarrow \infty }S_{n,p_{n},q_{n}}\left( h^{2}\left( t,x\right)
;x\right) =h^{2}\left( x,x\right) =0  \label{k2}
\end{equation}%
uniformly with respect to $x\in \left[ 0,A\right] .$ Hence from (\ref{k1}), (%
\ref{k2}) and (\ref{r1}) we obtain%
\begin{equation}
\lim_{n\rightarrow \infty }\left[ n\right] _{p_{n},q_{n}}S_{n,p_{n},q_{n}}%
\left( h\left( t,x\right) \left( t-x\right) ^{2};x\right) =0.  \label{h1}
\end{equation}%
Then using (\ref{s1}), (\ref{s2}) and (\ref{h1}) we have%
\begin{eqnarray*}
\lim_{n\rightarrow \infty }\left[ n\right] _{p_{n},q_{n}}\left[
S_{n,p_{n},q_{n}}\left( f;x\right) -f\left( x\right) \right]  &=&f^{\prime
}\left( x\right) \lim_{n\rightarrow \infty }\left[ n\right]
_{p_{n},q_{n}}S_{n,p_{n},q_{n}}\left( t-x;x\right)  \\
&&+f^{\prime \prime }\left( x\right) \lim_{n\rightarrow \infty }\left[ n%
\right] _{p_{n},q_{n}}S_{n,p_{n},q_{n}}\left( \left( t-x\right)
^{2};x\right)  \\
&&+\lim_{n\rightarrow \infty }\left[ n\right]
_{p_{n},q_{n}}S_{n,p_{n},q_{n}}\left( h\left( t,x\right) \left( t-x\right)
^{2};x\right)  \\
&=&\alpha xf^{\prime }\left( x\right) +\left( \gamma x^{2}+x\right)
f^{\prime \prime }\left( x\right) ,
\end{eqnarray*}%
which is desired.
\end{proof}

\end{document}